\date{24 Nov, 2020}
\DeclareFontFamily{OT1}{pzc}{}
\DeclareFontShape{OT1}{pzc}{m}{it}{<-> s * [1.10] pzcmi7t}{}
\DeclareMathAlphabet{\mathpzc}{OT1}{pzc}{m}{it}
\DeclareFontFamily{OT1}{rsfs}{}
 \DeclareFontShape{OT1}{rsfs}{n}{it}{<->rsfs10}{}
 \DeclareMathAlphabet{\curly}{OT1}{rsfs}{n}{it}
\theoremstyle{plain}  % default
\newtheorem{theorem}{Theorem}[section]
\newtheorem*{theorem*}{Theorem}
\newtheorem{lemma}[theorem]{Lemma}
\theoremstyle{remark}
\newtheorem*{claim*}{Claim}
\numberwithin{equation}{section}
\renewcommand{\leq}{\leqslant}
\renewcommand{\geq}{\geqslant}
\renewcommand{\ge}{\geqslant}
\renewcommand{\phi}{\varphi}
\renewcommand{\phi}{\varphi}
\begin{document}
\title[Stability of Syzygy bundles ]{Stability of Syzygy bundles corresponding to stable vector bundles on algebraic surfaces}
\author{SURATNO BASU}
\email{suratno@cmi.ac.in, suratno.math@gmail.com}
\address{Chennai Mathematical Institute, Siruseri Sipcot IT Park,
         Kelambakkam, Chennai-603103, Tamilnadu, India}
\author{SARBESWAR PAL}
\email{sarbeswar11@gmail.com, spal@iisertvm.ac.in}
\address{IISER Thiruvananthapuram, Maruthamala P. O., Kerala 695551}

\keywords{Syzygy bundles, slope stable}
\subjclass[2010]{14J60}

\begin{abstract}
 Let $(X, H)$ be a polarized smooth projective algebraic surface and $E$ is globally generated, stable vector bundle on $X$. Then  the Syzygy
 bundle $M_E$ associated to it is defined as the kernel bundle corresponding to the evaluation map. In this article we will study the stability property of $M_E$ 
 with respect to $H$.
\end{abstract}

\maketitle

\section{Introduction}

%Let $X$  be a projective scheme, $L$ an ample and globally generated line bundle
%on $X$ and $V \subset  H^0(X, L)$,  a base-point-free subspace. For any coherent sheaf $\mathcal{F}$  on $X$, we can form the associated
%group of sections
%\[
%\Gamma_X(\mathcal{F}, L) := \oplus_{q \in \mathbb{Z}}H^0(X, \mathcal{F} \otimes L^q),
%\]
%which has a natural structure of graded $\text{Sym}^{\bullet} V$-module. Hence, if $\Gamma_X(F, L)$ is
%finitely generated, we can consider its minimal free resolution with respect to $V$
%and define the Koszul cohomology groups
%\[
%K_{p,q}(X, \mathcal{F}, L; V):=K_{p, q}(\Gamma_X(\mathcal{F}, L), V).
%\]

The purpose of this paper is to investigate the stability of Syzygy bundles associated to a stable 
and globally generated vector bundles on a smooth projective algebraic surface.

Let $X$ be a smooth, irreducible, projective algebraic variety defined over an algebraically closed field $k$. 
We fix a very ample divisor $H$ on $X$. We refer to the pair $(X,H)$ as a polarized algebraic variety. Let 
$E$ be a globally generated vector bundle on $X$. Then the Syzygy bundle $M_{_E}$ is defined to be the kernel 
bundle corresponding to the evaluation map $\text{ev}:H^0(E)\otimes \mathcal{O}_{_X}\to E\to 0$. Thus we have an exact sequence 
\[0\to M_{_E}\to H^0(E)\otimes \mathcal{O}_{_X}\to E\to 0.\]

These vector bundles (and some analogues) arise in a variety of geometric and algebraic problems. 
For example  what are the numerical conditions (optimal) on vector bundles $E$ and $F$ on $X$ such that the natural  product map
\[
H^0(X, E) \times H^0(X, F) \to H^0(X, E \otimes F),
\]
becomes surjective.  The first initiative towards this question  was taken by D.C. Butler \cite{B}. 
He gave an affirmative answer when $X$ is a smooth projective curve. He approached the question  via the vector
bundle $M_E$
 associated to a bundle $E$- generated by global sections. 
 Consequently, there has been considerable interest in trying to establish the stability of
$M_E$ in various settings. When $X$ is a smooth curve of genus $g\ge  1$, the semistability was studied by Butler. 
He proved that when $E$ is semistable with $\mu(E) \ge 2g$, then $M_E$ is semistable.

Our main aim in this paper is to study the (slope) stability of $M_{_E}$ with respect to $H$ when $E$ is stable with respect to $H$ and $X$ is a smooth projective 
surface.
In \cite{ELM} the authors study the stability of $M_{_E}$ when $E$ is a very ample line bundle. In fact they showed that, if we take sufficiently 
large power of $E$ then the kernel bundle is (slope) stable with respect to $E$. In this short note we consider the case of $H$-stable vector bundles.
For any vector bundle $E$ on $X$ and $m>0$ let $E(m):=E\otimes \mathcal{O}_{_X}(mH)$.
We will prove the following

\begin{theorem}
 Let $E$ be a $H$-(slope) stable vector bundle on $X$.
 There exists $m>>0$ such that the kernel bundle $M_{_{E(m)}}$ is stable with respect to $H$.
\end{theorem}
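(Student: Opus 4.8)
The plan is to test stability of $M_{E(m)}$ against an arbitrary coherent subsheaf $F \subset M_{E(m)}$, which we may assume to be saturated (hence reflexive, and torsion-free). The key structural fact is that $M_{E(m)}$ sits inside the trivial bundle $H^0(E(m)) \otimes \mathcal{O}_X$, so any such $F$ comes with an induced map $F \to H^0(E(m)) \otimes \mathcal{O}_X$, and the composite $F \to H^0(E(m)) \otimes \mathcal{O}_X \to E(m)$ is zero. First I would record that $\mu_H(M_{E(m)}) = -\frac{\mu_H(E(m)) \cdot \operatorname{rk} E(m)}{\operatorname{rk} M_{E(m)}}$, which is a large negative number of order $-m$ since $\mu_H(E(m)) = \mu_H(E) + m\,H^2$ grows linearly in $m$ while $\operatorname{rk} M_{E(m)} = h^0(E(m)) - \operatorname{rk} E$ grows like $m^2$ (by Riemann--Roch on the surface, $h^0(E(m))$ is a degree-two polynomial in $m$ for $m \gg 0$). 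Thus $\mu_H(M_{E(m)}) \to 0^-$, and more precisely it is $O(1/m)$.

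The main step is a lower bound on $\mu_{\max,H}(M_{E(m)})$, i.e.\ an upper bound on $\mu_H(F)$ for destabilizing $F$. Dualize: the inclusion $M_{E(m)} \hookrightarrow H^0(E(m)) \otimes \mathcal{O}_X$ dualizes to a surjection (up to torsion) $H^0(E(m))^\vee \otimes \mathcal{O}_X \twoheadrightarrow M_{E(m)}^\vee$, and a subsheaf $F$ of $M_{E(m)}$ of rank $r$ gives a quotient $M_{E(m)}^\vee \to F^\vee$ generically of rank $r$; composing, $F^\vee$ is globally generated away from a closed subset, so $\det F^\vee$ is effective-ish — more carefully, one gets $c_1(F^\vee)\cdot H \ge 0$ by the standard argument that a sheaf which is a quotient of a trivial bundle off codimension two has first Chern class of non-negative degree. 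Hence $\deg_H F = c_1(F)\cdot H \le 0$, giving $\mu_H(F) \le 0$. To upgrade ``$\le 0$'' to ``$< \mu_H(M_{E(m)})$'' we must exclude the boundary case $\mu_H(F) = 0$, equivalently $c_1(F)\cdot H = 0$: here I would argue that a rank-$r$ subsheaf $F \subset M_{E(m)}$ with $c_1(F)\cdot H = 0$ forces, after passing to the quotient trivial-bundle picture, a sub-bundle of $H^0(E(m))\otimes\mathcal{O}_X$ generated by a \emph{constant} subspace $W \subset H^0(E(m))$ (a sub-representation), i.e.\ $F = \ker(W\otimes\mathcal{O}_X \to E(m))$ up to the relevant saturations; this uses that a subsheaf of a trivial bundle with $c_1\cdot H = 0$ that is saturated must itself be a trivial subbundle $W \otimes \mathcal{O}_X$ generically. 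Then $M_{E(m)}/F$ injects (generically) into $M_{E(m)/(W\otimes\mathcal{O}_X\cap\text{stuff})}$ — the cleaner statement is that such an $F$ produces a proper globally generated subsheaf $E' \subseteq E(m)$ with $h^0(E') \le \dim W$ and $F = M_{E'}$ relative to $W$; stability of $E(m)$ (equivalently of $E$) then forces $\mu_H(E') < \mu_H(E(m))$, and a rank/degree bookkeeping shows the resulting $F$ cannot destabilize once $m$ is large, because $\mu_H(M_{E'})$ is again $O(1/m)$ but strictly dominated.

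The cleanest route, which I would actually pursue, is via restriction to a general high-degree curve: choose $C \in |dH|$ general and smooth; by Mehta--Ramanathan-type results $E|_C$ and $M_{E(m)}|_C$ have semistable/stable pieces controlled by the surface data, and $M_{E(m)}|_C = M_{E(m)|_C}$ because $H^0(E(m)) \to H^0(E(m)|_C)$ is an isomorphism for $m \gg 0$ (again Riemann--Roch / Serre vanishing: $H^1(E(m)(-dH)) = 0$). On the curve $C$, $E(m)|_C$ is a stable bundle of slope $\mu(E|_C) + m\,(H\cdot H)|_C \to \infty$, so once this slope exceeds $2g(C)$ Butler's theorem gives that $M_{E(m)|_C}$ is semistable on $C$. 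A refinement of Butler's argument (strict inequality, using that $E(m)|_C$ is stable, not merely semistable, and slope $> 2g(C)$) yields \emph{stability} of $M_{E(m)}|_C$ on $C$. Since stability of the restriction to a general member of a base-point-free linear system implies stability of $M_{E(m)}$ on $X$ with respect to $H$, we conclude.

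The main obstacle, and where I expect the real work to lie, is the passage from Butler's \emph{semi}stability on the curve to \emph{strict} stability — Butler's bound $\mu \ge 2g$ is sharp for semistability and one must genuinely use the stability of $E(m)|_C$ together with the strict inequality $\mu(E(m)|_C) > 2g(C)$ (available for $m \gg 0$) to rule out a destabilizing sub-line-subbundle of slope exactly $\mu(M_{E(m)|_C})$; a careful analysis of the evaluation sequence restricted to such a putative sub-bundle, comparing $h^0$ of the induced quotient of $E(m)|_C$ against the Clifford-type bound, should close this, but it requires care. A secondary technical point is making uniform in $m$ the choice of $d$ (degree of the test curve) and the genus $g(C)$, which grows with $d$; one fixes $d$ first (large enough for Mehta--Ramanathan and for $H^0(E(m))\xrightarrow{\sim}H^0(E(m)|_C)$), then takes $m$ large relative to $d$ and to $g(C)$.
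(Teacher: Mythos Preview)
Your ``cleanest route'' contains a genuine error that breaks the argument. You claim that for a fixed curve $C\in|dH|$ and $m\gg0$ the restriction map $H^0(X,E(m))\to H^0(C,E(m)|_C)$ is an isomorphism, and hence $M_{E(m)}|_C = M_{E(m)|_C}$. Serre vanishing $H^1(E(m-d))=0$ gives only \emph{surjectivity}; injectivity would require $H^0(E(m-d))=0$, which fails badly for $m\gg0$ (indeed $h^0(E(m-d))$ grows like $\tfrac{l}{2}m^2$). The correct statement is that restriction of the syzygy sequence yields
\[
0 \longrightarrow \mathcal{O}_C^{\,h^0(E(m-d))} \longrightarrow M_{E(m)}|_C \longrightarrow M_{E(m)|_C} \longrightarrow 0,
\]
so $M_{E(m)}|_C$ contains a large trivial subbundle of slope $0$, while $M_{E(m)}|_C$ itself has strictly negative slope. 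Thus $M_{E(m)}|_C$ is \emph{never} semistable, and no amount of Butler on $M_{E(m)|_C}$ will directly give stability of the restriction. Your proposed order of quantifiers (``fix $d$, then take $m$ large'') makes this worse: the trivial piece has rank asymptotic to $\text{rank}\,M_{E(m)}$ itself.

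The paper confronts exactly this obstruction. It lets the curve degree grow with $m$ by taking $C_m\in|(m-n)H|$ for a single fixed $n$, so the trivial kernel has rank $h^0(E(n))$ \emph{independent of $m$}. Even so, $M_{E(m)}|_{C_m}$ is still not stable, and the argument does not proceed via ``stable restriction $\Rightarrow$ stable on $X$''. Instead one assumes a destabilizing $F_m\subset M_{E(m)}$, restricts it to $C_m$, and compares it to the extension above. Butler (applied to the stable bundle $E(m)|_{C_m}$, via Mehta--Ramanathan) makes the quotient $\overline{M}_m=M_{E(m)|_{C_m}}$ stable, which forces the image $N_m$ of $F_m|_{C_m}$ in $\overline{M}_m$ to be small and the kernel $K_m\subset\mathcal{O}_{C_m}^{h^0(E(n))}$ to be large. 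A separate dimension count (a finite multiplication map of sections, in the spirit of ELM) forces $\text{rank}(F_m)\ge h^0(\mathcal{O}_X((m-n)H))-1$. These two rank bounds are then played off against each other, and against the stability of $E$, to reach a contradiction for $m\gg0$. Your first sketch (the $c_1(F)\cdot H\le 0$ observation) is correct but, as you note, only gets $\mu_H(F)\le 0$; since $\mu_H(M_{E(m)})<0$, excluding merely the boundary $\mu_H(F)=0$ is not sufficient, and the informal ``trivial-subbundle'' analysis you outline does not close the gap.
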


%The above result is a higher rank generalisation of the main result of \cite{ELM} where it is shown that given any very ample line bundle $L$ there 
%a sufficiently large twist of $L$ by some large $m$ such that the kernel bundle is stable. 
We follow the main proof strategy of \cite{ELM}. Our method is suitable enhancement of the arguments given in \cite{ELM}.
Two key 
ingredients of the proof are Mehta-Ramanathan restriction theorem and Butler's theorem on stability of kernel bundles on curves.
We note that our theorem as well as the theorem of \cite{ELM} are not effective in a sense there is no concrete lower bound of $m$.

\section{Proof Of the main theorem}
This section is devoted to the proof of the main theorem. First let us fix up some notations.
As before, $X$ is a smooth, irreducible projective surface and $H$ a very ample divisor. Let $E$ be a stable vector bundle with respect to $H$ of rank $l$ 
on $X$.
%For 
%any $m\geq 0$ let $E(m):=\mathcal{O}_{_X}(mH)$. 
For the entire course of arguments we fix an integer $n>>0$, sufficiently large, such that 
$nH-K_{_X}$ is very ample where $K_{_X}$ is the canonical divisor and $H^i(X,E(n))=0$, $i=1,2$. For any closed point $x$ let $m_{_x}$ denotes the ideal defining the 
the point $x$.

We observe the following easy Lemma.
\begin{lemma}
If $W\subset H^0(X,E(n))$ is a subspace which generates $E(n)$ then the natural multiplication 
map \[H^0(X,\mathcal{O}_{_X}((m-n)H)\otimes m_{_x}))\otimes W\to H^0(X,E(m)\otimes m_{_x})\] is surjective for $m > > 0$.
\end{lemma}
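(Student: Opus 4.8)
The statement is a Castelnuovo–Mumford regularity argument, so the plan is to reduce to a surjectivity of global sections statement that follows from regularity and then feed in the hypothesis on $W$. First I would fix the setup: we know $E(n)$ is globally generated, $H^i(X,E(n))=0$ for $i=1,2$, and $W\subset H^0(X,E(n))$ generates $E(n)$. Tensoring the Koszul-type exact sequence $0\to\mathcal{O}_X((m-n)H)\otimes m_x\to \mathcal{O}_X((m-n)H)\to \mathcal{O}_X((m-n)H)\otimes(\mathcal{O}_X/m_x)\to 0$ with the relevant sheaves is one route, but the cleaner path is to exploit that $W$ generates $E(n)$, so there is a surjection $W\otimes\mathcal{O}_X\twoheadrightarrow E(n)$ with locally free kernel (a twist of a Syzygy bundle, call it $M$), giving $0\to M\to W\otimes\mathcal{O}_X\to E(n)\to 0$. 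Twisting this by $\mathcal{O}_X((m-n)H)\otimes m_x$ and taking cohomology, the map in the lemma is surjective provided $H^1(X,M\otimes\mathcal{O}_X((m-n)H)\otimes m_x)=0$.

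The key step is then to show this $H^1$ vanishing holds for $m\gg 0$. The coherent sheaf $M\otimes m_x$ (or rather $M$ together with the ideal sheaf $m_x$, which sits in $0\to m_x\to\mathcal{O}_X\to k(x)\to 0$) is a fixed coherent sheaf independent of $m$ once $W$ (hence $M$) is fixed; by Serre vanishing applied to $M\otimes m_x$, or more precisely by the fact that any coherent sheaf becomes $0$-regular after a large enough twist by an ample line bundle, we get $H^1(X, (M\otimes m_x)((m-n)H))=0$ for all $m$ sufficiently large. One subtlety: $m_x$ appears on the left factor $\mathcal{O}_X((m-n)H)\otimes m_x$ in the statement, not on $W$; the cleanest bookkeeping is to note $H^0(X,\mathcal{O}_X((m-n)H)\otimes m_x)\otimes W$ surjects onto $H^0(X, E(n)\otimes\mathcal{O}_X((m-n)H)\otimes m_x)=H^0(X,E(m)\otimes m_x)$ precisely when the connecting map into $H^1(X, M\otimes\mathcal{O}_X((m-n)H)\otimes m_x)$ vanishes, and the latter group vanishes by Serre vanishing since $M$ is a fixed vector bundle and $m_x$ a fixed ideal sheaf.

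The main obstacle is really just making sure the bound is uniform in the right sense for the intended later use — here it is fine since $E$, $n$, $W$ and $x$ are all fixed when the lemma is invoked, so Serre vanishing suffices and no effectivity is claimed (consistent with the remark after the main theorem). I would therefore present the argument as: (i) write the defining sequence for $M=\ker(W\otimes\mathcal{O}_X\to E(n))$; (ii) tensor by $\mathcal{O}_X((m-n)H)\otimes m_x$; (iii) take the long exact sequence in cohomology and identify the obstruction to surjectivity as $H^1(X, M\otimes\mathcal{O}_X((m-n)H)\otimes m_x)$; (iv) invoke Serre vanishing on the fixed coherent sheaf $M\otimes m_x$ (equivalently, regularity) to kill this group for $m\gg 0$. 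No serious calculation is needed; the content is packaging the hypothesis "$W$ generates $E(n)$" into the exact sequence and then citing Serre vanishing.
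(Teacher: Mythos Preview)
Your argument is correct and is essentially the same as the paper's: form the kernel bundle $M=\ker(W\otimes\mathcal{O}_X\to E(n))$, twist, and kill the $H^1$ obstruction by Serre vanishing. The only difference is that you carry the ideal sheaf $m_x$ through the exact sequence directly, whereas the paper first asserts (without explanation) that it suffices to treat the map $H^0(\mathcal{O}_X((m-n)H))\otimes W\to H^0(E(m))$ without $m_x$; your version is cleaner on this point since it bypasses that reduction entirely.
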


\begin{proof}

Note that it is enough to prove that the map 
\[H^0(X,\mathcal{O}_{_X}((m-n)H)))\otimes W \to H^0(X,E(m))\] is surjective for some $m>>0$.
By definition of $W$ we have a surjection $W\otimes \mathcal{O}_{_X}(-n)\to E$. Let $K$ be the kernel 
of this surjection. Then we have the following exact sequence 
\[0\to K\to W\otimes \mathcal{O}_{_X}(-n)\to E\to 0\]
We choose $m>>0$ such that $H^1(X,K(m))=0$. After tensoring the above exact sequence by $\mathcal{O}_{_X}(m)$ and passing to the corresponding long 
exact sequence we get \[H^0(X,\mathcal{O}_{_X}((m-n)H)))\otimes W \to H^0(X,E(m))\] is surjective.

\end{proof}
We now choose a $m>>0$ independent of $n$ 
such that 

(1) $\mathcal{O}_{_X}(mH)$ and $\mathcal{O}((m-n)H)$ are very ample.

(2) If $W\subset H^0(X,E(n))$ is a subspace which generates $E(n)$ then the natural multiplication 
map \[H^0(X,\mathcal{O}(m-n)H\otimes m_{_x})\otimes W\to H^0(X,E(m)\otimes m_{_x})\] is surjective.

%\begin{lemma}
 %Let $E$ be a rank $r$ $H$-stable vector bundle on $X$. Then there exists a $n>0$ such that if 
 %$W$ is a subpace of $H^0(X,E(n))$ with $\text{dim}(W)\geq \frac{h^0(E(n))}{r}$ then $W$ generates $E(n)$.
%\end{lemma}

%(2) For all $x\in X$ the natural mapping 
    %\[H^0(X,\mathcal{O}_{_X}((m-n)H)\otimes m_{_x})\otimes H^0(X,E(n))\to H^0(X,E(m)\otimes m_{_x})\] is surjective.
    
 We aim to show that there exists a $m>>0$ such that $M_{_{E(m)}}$ is $H$-stable. We first analyze if for some $m>0$, 
 $M_{_{E(m)}}$ is not $H$-stable. In this case there exists a saturated locally free subsheaf $F_{_m}\subset M_{_{E(m)}}$ 
 such that \[\frac{c_{_1}(F_{_m}).H}{rk(F_{_m})}\geq \frac{c_{_1}(M_{_{E(m)}}).H}{rk(M_{_{E(m)}})}\]
 Our goal is to show that for sufficiently large $m>>0$ no such $F_{_m}$ can exists. 
 
 Pick a smooth and irreducible curve $C_{_m}\in |(m-n)H|$ through a fixed point $x\in X$. We may also assume that 
 $M_{_{E(m)}}/F_{_m}$ is locally free along $C_{_m}$. Observe that 
 \[\mu_{_H}(F_{_m})=\frac{c_{_1}(F_{_m}).H}{rk(F_{_m})}=\frac{1}{(m-n)}\mu(F_{_m}|_{_{C_m}}),\]
 Similarly, \[\mu_{_H}(M_{_{E(m)}})=\frac{1}{(m-n)}\mu(M_{_{E(m)}}|_{_{C_{_m}}})\]
 
 Thus we have \begin{equation}\label{eqn3}
               \mu(F_{_m}|_{_{C_m}})\geq \mu(M_{_{E(m)}}|_{_{C_{_m}}})
              \end{equation}

Since, $H^1(X,E(n))=0$ we have the following exact sequence 
\begin{equation}\label{eqn4}
 0\to \mathcal{O}_{_C}^{h^0(E(n))}\to M_{_{E(m)}}|_{_{C_{_m}}}\to \overline{M}_{_m}\to 0
\end{equation}
where $\overline{M}_{_m}$ is the kernel bundle corresponding to $E(m)|_{_{C_m}}$. 

As $E$ is stable with respect to $H$,  $E(m)$ is stable with respect to $H$. Now if we choose $m$ so that $(m-n)>>0$ the 
by Mehta-Ramanathan restriction theorem (\cite[Theorem 7.2.8]{HL}) $E(m)|_{_{C_m}}$ is stable on $C_{_m}$. Now we have 
\[\text{deg}(E(m)|_{_{C_{_m}}})=\text{deg}(E|_{_{C_m}})+r\text{deg}(\mathcal{O}_{_X}(m)|_{_{C_m}})\]
Write 
\[mH=K_{_X}+mH-nH+Q\] where $Q=nH-K_{_X}$. By our assumption $Q$ is very ample. Since, $C_{m}\in |(m-n)H|$ by adjunction formula 
we get $K_{_{C_m}}=(K_{_X}+(m-n)H)|_{_{C_m}}$. Thus $\text{deg}(\mathcal{O}_{_X}(m)|_{_{C_m}})=\text{deg}(K_{_{C_m}})+Q.C_{_m}$.
As $Q.C_{_m}\geq 3$ 
\begin{equation}\label{eqn5}
\text{deg}(K_{_{C_m}})+Q.C_{_m}\geq 2g_{_{C_m}}+1
 \end{equation}

We have 
\[\text{deg}(E(m)|_{_{C_{_m}}})=\text{deg}(E|_{_{C_m}})+r\text{deg}(\mathcal{O}_{_X}(m)|_{_{C_m}})\geq \text{deg}(E|_{_{C_m}})+r(2g_{_{C_m}}+1)\]
We assume that $c_1(E)$ is effective then $c_1(E).C_{_m}\geq 0$. Then we have 
\[\text{deg}(E|_{_{C_m}})+r(2g_{_{C_m}}+1)\geq r(2g_{_{C_m}}+1)\]
Therefore, $\text{deg}(E(m)|_{_{C_m}})\geq r(2g_{_{C_m}}+1)$ and hence 
$\mu(E(m)|_{_{C_m}})\geq (2g_{_{C_m}}+1)$. By Butler's theorem (\cite[Theorem 1.2]{B}) we have $\overline{M}_{_m}$ is stable. 

Let $K_{_m}$ be the kernel of $F_{_m}\hookrightarrow M_{_{E(m)}}|_{_{C_{_m}}}\to \overline{M}_{_m}$ and $N_{_m}$ be the image. As $\overline{M}_{_m}$ is 
stable $K_{_m}\neq 0$.
Then we have the following commutative diagram
%\texttt{tikz-cd solution: }
\begin{equation}\label{eqn6} \begin{tikzcd}[arrows={-Stealth}]
 0\rar & K_m\rar \dar & F_m\rar \dar  & N_{_m}\rar \dar  & 0\\%
0\rar & \mathcal{O}_{_{C_m}}^{h^0(E(n))}\rar & M_{_{E(m)}}|_{_{C_{_m}}}\rar & \overline{M}_{_m}\rar & 0
\end{tikzcd}
\end{equation}

We complete the proof following two crucial lemmas. The proofs of these two lemmas follow the exact line of arguments of \cite[Lemma 1.1, Lemma 1.2]{ELM}
with some modifications. However, for completeness sake we will provide the proofs.
\begin{lemma}\label{rk1}
 For any $x\in X$, $\text{rank}(F_{_m})\geq h^0(X,\mathcal{O}_{_X}((m-n)H)\otimes m_{_x})=h^0(X,\mathcal{O}_{_X}((m-n)H))-1$.
 \begin{proof}
  For any vector space $V$ on $X$, $\mathbb{P}_{_{\text{sub}}}(V)$ denotes the projective space associated to $V$ 
  consisting $1$ dimensional linear subspace of $V$. Multiplication of sections gives rise to a finite morphism 
 
 \[\mu_{_m}:\mathbb{P}_{_{\text{sub}}}(H^0(X,\mathcal{O}_{_X}((m-n)H)\otimes m_{_x})\times \mathbb{P}_{_{\text{sub}}}(H^0(X,E(n)))\to \mathbb{P}_{_{\text{sub}}}(H^0(X,E(m)\otimes m_x))\]
  which sends \[([s_1],[s_2])\mapsto s_1\otimes s_2.\]
  Note that this morphism is composition of the Segre embedding 
  \[\mathbb{P}_{_{\text{sub}}}(H^0(X,\mathcal{O}_{_X}((m-n)H)\otimes m_{_x})\times \mathbb{P}_{_{\text{sub}}}(H^0(X,E(n)))\hookrightarrow \mathbb{P}_{_{\text{sub}}}(H^0((X,\mathcal{O}_{_X}((m-n)H)\otimes m_{_x}))\otimes H^0(X,E(n)),\]
  followed by the rational map 
  \[\mathbb{P}_{_{\text{sub}}}(H^0((X,\mathcal{O}_{_X}((m-n)H)\otimes m_{_x}))\otimes H^0(X,E(n))\dashrightarrow \mathbb{P}_{_{\text{sub}}}(H^0(X,E(m)\otimes m_x))\]
  %Thus since by $(2)$ for large $m$, the morphism $\mu_{_m}$ is surjective and $h^0(\mathcal{O}_{_X}((m-n)H)+h^0(E(n))\leq h^0(E(m)\otimes m_x)$ we have 
  %$\mu_{_m}$ is finite.  
  For any $x\in X$ we have $F_{_m}(x)\subseteq M_{_m}(x)=H^0(X,E(m)\otimes m_{_x})$. 
  Let $Z:=\mu_{_m}^{-1}(\mathbb{P}_{_{\text{sub}}}(F_{_m}(x))$. Then $\mu_{_m}:Z\to \mathbb{P}_{_{\text{sub}}}(F_{_m}(x)$ is finite morphism.
  Therefore, 
  \begin{equation}\label{p1}
  \text{dim}(Z)\leq \text{dim}(F_{_m}(x))-1
   \end{equation}
Note that for any $s\in H^0(X,\mathcal{O}_{_X}((m-n)H)\otimes m_{_x})$ the map $H^0(X,E(n))\to H^0(X,E(m)\otimes m_{_x})$ 
which sends $\phi\mapsto s\otimes \phi$ is injective. Thus for any $[s]\in \mathbb{P}_{_{\text{sub}}}H^0(X,\mathcal{O}_{_X}((m-n)H)$ $[s]\times K_{_m}(x)\subseteq Z$.
Therefore, $\pi_{_1}:Z\to \mathbb{P}_{_{\text{sub}}}H^0(X,\mathcal{O}_{_X}((m-n)H)$ is dominant where 
$\pi_{_1}:\mathbb{P}_{_{\text{sub}}}(H^0(X,\mathcal{O}_{_X}((m-n)H)\otimes m_{_x})))\times \mathbb{P}_{_{\text{sub}}}(H^0(X,E(n)))\to \mathbb{P}_{_{\text{sub}}}H^0(X,\mathcal{O}_{_X}((m-n)H)$
is the first projection. Consequently, we have 
\begin{equation}\label{p2}
 \text{dim}(Z)\geq h^0(\mathcal{O}((m-n)H\otimes m_{_x}))-1
\end{equation}
Combining \ref{p1} and \ref{p2} we get the Lemma.
 \end{proof}
\end{lemma}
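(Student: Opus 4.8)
The plan is to bound $\operatorname{rank}(F_{_m}) = \dim F_{_m}(x)$ from below by studying the fibre of the multiplication morphism $\mu_{_m}$ over the projectivized fibre $\mathbb{P}_{_{\mathrm{sub}}}(F_{_m}(x))$. First I would recall that, by the choice of $m$ in condition (2), the multiplication map $H^0(X,\mathcal{O}_{_X}((m-n)H)\otimes m_{_x})\otimes W\to H^0(X,E(m)\otimes m_{_x})$ is surjective for any generating subspace $W\subseteq H^0(X,E(n))$; taking $W = H^0(X,E(n))$ (which generates $E(n)$ since $n$ was chosen large), the induced rational map on projective spaces, precomposed with the Segre embedding, gives the morphism $\mu_{_m}$, and surjectivity of the multiplication ensures $\mu_{_m}$ is defined on all of the Segre variety. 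The key finiteness input is that $\mu_{_m}$ restricted to the Segre variety is a finite morphism — this follows because the line bundle pulling back $\mathcal{O}(1)$ from the target is the Segre ample bundle $\mathcal{O}(1,1)$, which is ample on the product, so $\mu_{_m}$ has finite fibres and is projective, hence finite.

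Next I would set $Z := \mu_{_m}^{-1}(\mathbb{P}_{_{\mathrm{sub}}}(F_{_m}(x)))$. Since $\mu_{_m}$ is finite, $\mu_{_m}|_Z \colon Z \to \mathbb{P}_{_{\mathrm{sub}}}(F_{_m}(x))$ is finite, so $\dim Z \le \dim \mathbb{P}_{_{\mathrm{sub}}}(F_{_m}(x)) = \dim F_{_m}(x) - 1$; this is inequality \eqref{p1}. The lower bound \eqref{p2} comes from producing enough of $Z$: fixing any $[s]\in \mathbb{P}_{_{\mathrm{sub}}}H^0(X,\mathcal{O}_{_X}((m-n)H)\otimes m_{_x})$, multiplication by $s$ sends $H^0(X,E(n))$ injectively into $H^0(X,E(m)\otimes m_{_x})$ (the section $s$ is not a zero divisor on the integral variety $X$), and — crucially — it carries $K_{_m}(x)$, the fibre at $x$ of the kernel sheaf $K_{_m}$ in diagram \eqref{eqn6}, into $F_{_m}(x)$, so that $\{[s]\}\times \mathbb{P}_{_{\mathrm{sub}}}(K_{_m}(x))\subseteq Z$. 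Here one uses that $K_{_m}\subseteq \mathcal{O}_{_{C_m}}^{h^0(E(n))}$, whose fibre over $x$ sits inside $H^0(X,E(n))$, and that the composite $\mathcal{O}_{_{C_m}}^{h^0(E(n))}\hookrightarrow M_{_{E(m)}}|_{_{C_m}}$ specializes at $x$ to exactly the map "multiply by a section vanishing at $x$". Since $K_{_m}\neq 0$ (because $\overline{M}_{_m}$ is stable, as established above, so $F_{_m}$ cannot inject into it), this shows the first projection $\pi_{_1}\colon Z\to \mathbb{P}_{_{\mathrm{sub}}}H^0(X,\mathcal{O}_{_X}((m-n)H)\otimes m_{_x})$ is dominant, whence $\dim Z \ge h^0(X,\mathcal{O}_{_X}((m-n)H)\otimes m_{_x}) - 1$. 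Combining with \eqref{p1} gives $\operatorname{rank}(F_{_m}) = \dim F_{_m}(x) \ge h^0(X,\mathcal{O}_{_X}((m-n)H)\otimes m_{_x})$, and the final equality with $h^0(X,\mathcal{O}_{_X}((m-n)H)) - 1$ holds because $\mathcal{O}_{_X}((m-n)H)$ is very ample, so evaluation at $x$ is surjective onto the fibre and $m_{_x}$ drops $h^0$ by exactly one.

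The main obstacle I anticipate is the bookkeeping at the fibre over $x$: one must check carefully that "evaluation at $x$" of the inclusion $F_{_m}\hookrightarrow M_{_{E(m)}}|_{_{C_m}}$ and of the sequence \eqref{eqn4} behaves as claimed, i.e. that $F_{_m}(x)\subseteq M_{_{E(m)}}(x) = H^0(X,E(m)\otimes m_{_x})$ and that the sub-bundle structure is respected upon restriction to the fibre — this uses that $C_{_m}$ was chosen through $x$ with $M_{_{E(m)}}/F_{_m}$ locally free along $C_{_m}$, so no torsion artifacts appear at $x$. The finiteness of $\mu_{_m}$ also deserves a clean one-line justification via ampleness of $\mathcal{O}(1,1)$ rather than being asserted. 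Everything else is the dimension count already laid out.
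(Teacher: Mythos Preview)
Your proposal is correct and follows essentially the same route as the paper: set up the multiplication morphism $\mu_{_m}$ via the Segre embedding, use its finiteness to bound $\dim Z$ from above by $\dim F_{_m}(x)-1$, and use that $\{[s]\}\times \mathbb{P}_{_{\mathrm{sub}}}(K_{_m}(x))\subseteq Z$ for each $[s]$ to force $\pi_{_1}$ dominant and hence bound $\dim Z$ from below. Your added justifications (finiteness via ampleness of $\mathcal{O}(1,1)$, the final equality from very ampleness, and the remark on local freeness along $C_{_m}$) are helpful elaborations the paper leaves implicit.
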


As $F_{_m}\subsetneq M_{E(m)}$ and $\text{rank}(M_{E(m)})=h^{0}(E(m))-l$. Therefore, $\text{rank}(F_{_m})< h^{0}(E(m))-l$.
Note that by Riemann Roch formula and the above lemma, we have $\text{rank}(F_m) \ge \frac{m^2}{2} + O(m)$, where $O(m)$ is a linear function of $m$.
Thus we have, 
\begin{equation}\label{R1}
 \text{rank}(F_{_m})=f(m),~ where~ f(m)~ is~ a~ function~ of~ m~ such~ that~ \frac{f(m)}{m^2}=r,~ \frac{1}{2}\leq r\leq \frac{l}{2}, ~as~ m\to \infty
\end{equation}

\begin{lemma}\label{rk2}
 $\text{rank}(K_{_m})\geq r h^0(E(n))$ for large $m>>0$.
 \begin{proof}
  From the equation \ref{eqn6} we get 
  \begin{equation}\label{eqn7}
   \mu(F_{_m}|_{_{C_m}})=\frac{\text{deg}(K_{_m})+\text{deg}(N_{_m})}{\text{rank}(F_{_m})}\leq \frac{\text{deg}(N_{_m})}{\text{rank}(F_{_m})}=\mu(N_{_m})\frac{\text{rank}(N_{_m})}{\text{rank}(F_{_m})}.
  \end{equation}
Since $\overline{M_{_m}}$ is stable we get 
\begin{equation}\label{eqn7}
 \mu(N_{_m})\frac{\text{rank}(N_{_m})}{\text{rank}(F_{_m})}< \mu(\overline{M_{_m}})\frac{\text{rank}(N_{_m})}{\text{rank}(F_{_m})}=\mu(\overline{M_{_m}})(1-\frac{\text{rank}(K_{_m})}{\text{rank}(F_{_m})}).
\end{equation}
Now from \ref{eqn6} we have $\text{deg}(M_{_m}|_{_{C_{_m}}})=\text{deg}(\overline{M_{_m}})$, and since 
\[\mu(M_{_m}|_{_{C_m}})\leq \mu(F_{_m}|_{_{C_m}})\] from equation \ref{eqn7} we get 
\begin{equation}
 \frac{\text{deg}(M_{_m}|_{_{C_m}})}{h^0(E(n))+\text{rank}(\overline{M_{_m}})}<\frac{\text{deg}(M_{_m}|_{_{C_m}})}{\text{rank}(\overline{M_{_m}})}(1-\frac{\text{rank}(K_{_m})}{\text{rank}(F_{_m})}).
\end{equation}

Noting the fact $\text{deg}(M_{_m}|_{_{C_m}})<0$ we get 
\begin{equation}
 \frac{1}{h^0(E(n))+\text{rank}(\overline{M_{_m}})}>\frac{1}{\text{rank}(\overline{M_{_m}})}(1-\frac{\text{rank}(K_{_m})}{\text{rank}(F_{_m})})
\end{equation}
i.e.,
\[\frac{\text{rank}(\overline{M_{_m}})}{h^0(E(n))+\text{rank}(\overline{M_{_m}})}>(1-\frac{\text{rank}(K_{_m})}{\text{rank}(F_{_m})})\]

Thus 
\[\frac{\text{rank}(K_{_m})}{\text{rank}(F_{_m})}>1-\frac{\text{rank}(\overline{M_{_m}})}{h^0(E(n))+\text{rank}(\overline{M_{_m}})}=\frac{h^0(E(n))}{h^0(E(n))+\text{rank}(\overline{M_{_m}})}\]
Therefore,
\[\text{rank}(K_{_m})> h^0(E(n))\frac{\text{rank}(F_{_m})}{\text{rank}(M_{_m})}\]
By Lemma \ref{rk1} and since $\text{rank}(M_{_m})=h^0(E(m))-l$ we have 
\[\text{rank}(K_{_m})> h^0(E(n))\frac{rm^2 + q(m)}{h^0(E(m))-l}\]
We have 
$\frac{rm^2 + q(m)}{h^0(E(m))-l}=r-\epsilon(m)$ where $\epsilon(m)\to 0$ as $m\to \infty$.
Thus for large $m>>0$ we get the inequality 
\[\text{rank}(K_{_m})\geq r h^0(E(n)).\]
 \end{proof}
 \end{lemma}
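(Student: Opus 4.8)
The plan is to play the destabilizing inequality \eqref{eqn3} off against the two structural features of the restriction to $C_m$ that have already been extracted: in \eqref{eqn4} the sub-bundle $\mathcal{O}_{C_m}^{h^0(E(n))}$ is \emph{trivial}, and the quotient $\overline{M}_m$ is \emph{stable}. The first thing I would record is that $\text{deg}(K_m)\le 0$: by the left-hand column of \eqref{eqn6}, $K_m$ is a coherent subsheaf of the trivial bundle $\mathcal{O}_{C_m}^{h^0(E(n))}$ on the smooth curve $C_m$, and a subsheaf of a trivial bundle on a smooth curve has non-positive degree (its saturation is a sub-bundle of a slope-$0$ semistable bundle). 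Reading off degrees and ranks from the top row of \eqref{eqn6} then gives
\[
\mu(F_m|_{C_m}) \;=\; \frac{\text{deg}(K_m)+\text{deg}(N_m)}{\text{rank}(F_m)} \;\le\; \frac{\text{deg}(N_m)}{\text{rank}(F_m)} \;=\; \mu(N_m)\,\frac{\text{rank}(N_m)}{\text{rank}(F_m)}.
\]

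Next I would bring in stability of $\overline{M}_m$. Since $F_m$ is a proper saturated subsheaf of $M_{E(m)}$ we have $\text{rank}(F_m)<\text{rank}(M_{E(m)})=\text{rank}(M_{E(m)}|_{C_m})$, so $N_m$ is a proper subsheaf of $\overline{M}_m$; the degenerate possibility $N_m=0$ needs no attention, for then $\text{rank}(K_m)=\text{rank}(F_m)$ is already quadratically large by Lemma \ref{rk1}. Hence $\mu(N_m)<\mu(\overline{M}_m)$, and feeding this together with \eqref{eqn3} into the previous display yields
\[
\mu(M_{E(m)}|_{C_m}) \;\le\; \mu(F_m|_{C_m}) \;<\; \mu(\overline{M}_m)\Bigl(1-\tfrac{\text{rank}(K_m)}{\text{rank}(F_m)}\Bigr).
\]
Then I would substitute the numerics of \eqref{eqn4}: $\text{deg}(M_{E(m)}|_{C_m})=\text{deg}(\overline{M}_m)$, $\text{rank}(M_{E(m)}|_{C_m})=h^0(E(n))+\text{rank}(\overline{M}_m)$, and $\text{deg}(\overline{M}_m)=-\text{deg}(E(m)|_{C_m})<0$ for $m$ large. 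Dividing through by the negative quantity $\text{deg}(\overline{M}_m)$ reverses the inequality, and clearing denominators leaves
\[
\frac{\text{rank}(K_m)}{\text{rank}(F_m)} \;>\; \frac{h^0(E(n))}{h^0(E(n))+\text{rank}(\overline{M}_m)} \;=\; \frac{h^0(E(n))}{\text{rank}(M_{E(m)})},
\]
i.e. $\text{rank}(K_m)>h^0(E(n))\cdot\text{rank}(F_m)/\text{rank}(M_{E(m)})$.

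To finish, I would compare sizes. By Lemma \ref{rk1} the rank of $F_m$ grows quadratically in $m$, while $\text{rank}(M_{E(m)})=h^0(E(m))-l$ is, for $m\gg 0$ and by Riemann--Roch, a polynomial of degree $2$ in $m$; hence $\text{rank}(F_m)/\text{rank}(M_{E(m)})$ converges, and in the notation of \eqref{R1} the last inequality sharpens to $\text{rank}(K_m)\ge r\,h^0(E(n))$ once $m$ is large. Conceptually the decisive move is the very first one --- noticing that the trivial sub-bundle $\mathcal{O}_{C_m}^{h^0(E(n))}$ forces $\text{deg}(K_m)\le 0$, which is precisely what turns the destabilizing hypothesis against itself; everything afterwards is bookkeeping with slopes and ranks. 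Technically the only step that demands real care is this final comparison: one must expand the two quantities precisely enough that the error in their ratio is $o(1)$, not merely bounded, before appealing to integrality of $\text{rank}(K_m)$, and one must confirm that the limiting value is exactly the $r$ recorded in \eqref{R1}.
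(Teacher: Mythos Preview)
Your proof is correct and follows the paper's argument step for step: the same chain of slope inequalities using $\deg(K_m)\le 0$ and stability of $\overline{M}_m$, the same sign reversal upon dividing by the negative $\deg(\overline{M}_m)$, and the same asymptotic comparison of $\text{rank}(F_m)/\text{rank}(M_{E(m)})$ via \eqref{R1}. You are in fact slightly more explicit than the paper in justifying $\deg(K_m)\le 0$ and in flagging the degenerate case $N_m=0$ and the integrality needed at the end.
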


{\bf Now we will proceed with the proof of Theorem $(1.1)$:}
%Note that since $E$ is a rank $2$ $H$-stable vector bundle there is an integer $n>>0$, such that for any subsheaf $G\subseteq E(n)$ 
%with $0<\text{rank}(G)<2$ we have 
%\begin{equation}\label{p3}
%h^0(G)<\frac{h^0(E(n))}{2}.
%\end{equation} 
%Fix one such integer $n$ such that \ref{p3} and the commutative diagram \ref{eqn6} hold.
Considering the fibres of the various vector bundles  appearing in the commutative diagram \ref{eqn6} we get 

\begin{center}
\begin{equation}\label{p4}
\begin{tikzcd}
K_{_m}(x)\arrow[r] \arrow[d] 
 & F_{_m}(x)\arrow[d] \\
 H^0(E(n))\arrow[r]
 & (M_{_m}|_{C_m})(x)=H^0(E(m)\otimes m_x)
 \end{tikzcd}
 \end{equation}
\end{center}

Let $G$ be the subsheaf of $E(n)$ generated by $K_{_m}(x)$. %Then $h^0(G)=K_{_m}(x)$.
We claim that $\text{rank}(G)>1$.
Note that as $G$ is generated by $K_{_m}(x)$ we have $K_{_m}(x)\subseteq H^0(X,G)$ in other words 
\begin{equation}\label{p5}
 \text{rank}(K_{_m}(x))\leq h^0(G).
\end{equation}

As $E$ is rank $l$ slope stable vector bundle with respect to $H$, for any proper subsheaf $F\subset E$ i.e., $0<\text{rank}(F)<l$, we have , for $d>>0$,
\[\frac{\chi(F(d))}{rk(F(d))}<\frac{\chi(E(d))}{l}.\] where $\chi$ is the Euler characteristic of the respective sheaves.
Considering $F:=G(-n)$ we get, from the above equation,
\[\frac{\chi(G)}{rk(G)}< \frac{\chi(E(n))}{l}\] Now we choose $n$, sufficiently large, such that $\chi(G)=h^0(G)$ and $\chi(E(n))=h^0(E(n))$.
Therefore, we have 
\begin{equation}\label{p6}
 \frac{h^0(G)}{rk(G)}<\frac{h^0(E(n))}{l}.
\end{equation}

Combining equation \ref{p5} and \ref{p6} we get $\frac{\text{rank}(K_{_m}(x))}{rk(G)}< \frac{h^0(E(n))}{l}$. On the other hand 
by Lemma \ref{rk2} $\frac{rh^0(E(n))}{rk(G)} \leq \frac{\text{rank}(K_{_m}(x))}{rk(G)}$ where $\frac{1}{2}\leq r\leq \frac{l}{2}$. 
This immediately implies that $rk(G)>lr\geq 1$ since $r\geq 
\frac{1}{2}$ and $l\geq 2$. 
%So a contradiction. Therefore, $\text{rank}(G)=\text{rank}(E(n))$.
We have $h^0(G(m-n)\otimes m_{_x})=h^0(G(m-n))-rk(G)=\frac{rk(G)}{2}m^2+p(m)$ where $p\in \mathbb{Q}[n][m]$ and linear in $m$. Let 
$t=\frac{rk(G)}{2}$ then $t\geq 1$.
Now we consider two cases separately.\\
{\bf Case I }: $r <1$.\\
As $t\geq 1$ by 
equation \ref{R1}  for large $m>>0$ 
\begin{equation}\label{R2}
\text{rank}(F_{_m})< h^0(G(m-n)\otimes m_{_x}).
 \end{equation}

For any $x\in X$ from the inclusion $G\hookrightarrow E(n)$ we obtain $H^0(G((m-n)\otimes m_{_x})\hookrightarrow H^0(E(m)\otimes m_{_x})$ 
and by \ref{R2} we get $\text{Im}(F_{_m}(x))\neq \text{Im}(H^0(G((m-n)\otimes m_{_x}))$ inside $H^0(E(m)\otimes m_{_x})$.
Let $B_{_m}(x)=\text{Im}(F_{_m}(x))\cap \text{Im}(H^0(G((m-n)\otimes m_{_x}))$. Then $B_{_m}(x)\subsetneq H^0(G((m-n)\otimes m_{_x}))$
For any section $s\in H^0(\mathcal{O}_{_X}((m-n))\otimes m_{_x})$ multiplication by $s$ maps 
\[H^0(G)\to H^0(G(m-n)\otimes m_{_x})\] and \[K_{_m}(x)\to B_{_m}(x).\] Thus from the  commutative diagram \ref{p4}
we get 
\begin{center}
\begin{equation}
\begin{tikzcd}
K_{_m}(x)\arrow[r] \arrow[d] 
 & B_{_m}(x)\arrow[d] \\
 H^0(G)\arrow[r]
 & H^0(G(m-n)\otimes m_x)
 \end{tikzcd}
 \end{equation}
\end{center}

Since $K_{_m}(x)$ generates $G$ by $(2)$ we conclude that 
\[H^0(\mathcal{O}_{_X}((m-n))\otimes m_{_x})\otimes K_{_m}(x) \to H^0(G(m-n)\otimes m_{_x})\] is surjective. This leads to a contradiction 
as we vary sections over an open set of $H^0(\mathcal{O}((m-n)H))$ the images of $K_{_m}(x)$ span the whole vector space $H^0(G(m-n)\otimes m_{_x})$.
But from the above commutative diagram every image lies on the proper fixed subspace $B_{_m}(x)$.
Therefore, $E(m)$ is $H$-stable.\\
{\bf Case II}: $r \geq 1$.\\
In this case by Lemma \ref{rk2} rank of $K_m = h^0(E(n))$. In other words, $G = E(n)$. Thus the Theorem follows by similar arguments as in Case I.

\end{document}